\documentclass[a4paper]{amsart}
\usepackage{amssymb}
\usepackage{stmaryrd}
\usepackage[all]{xy}
\usepackage{epsf}
\usepackage{enumerate}
\newcommand{\printname}[1] {}

\newtheorem{theorem}[]{Theorem}

\newtheorem{lemma}[]{Lemma}

\newtheorem{definition}[]{Definition}
\newtheorem{example}[theorem]{Example}
\theoremstyle{remark}

\newcommand{\rmap}{\longrightarrow}


\vfuzz12pt 
\hfuzz12pt 

\usepackage{amsmath,amscd}
\usepackage{wrapfig}


\begin{document}
\title{On the existence of symplectic realizations}
\author{Marius Crainic}
\address{Depart. of Math., Utrecht University, 3508 TA Utrecht,
The Netherlands}
\email{crainic@math.uu.nl}
\author{Ioan M\v arcu\c t}
\address{Depart. of Math., Utrecht University, 3508 TA Utrecht,
The Netherlands}
\email{I.T.Marcut@uu.nl}
\begin{abstract}
We give a direct global proof for the existence of symplectic realizations of arbitrary Poisson manifolds.
\end{abstract}
\maketitle

\setcounter{tocdepth}{1}

\section*{Introduction}

Let $(M, \pi)$ be a Poisson manifold. A \textbf{symplectic
realization} of $(M, \pi)$ is a symplectic manifold $(S, \omega)$
together with a Poisson submersion
\[ \mu: (S, \omega) \rmap (M, \pi).\]
Although the existence of symplectic realizations if a fundamental
result in Poisson geometry, the known proofs are rather involved.
Originally, the local result was proven in \cite{Wein} and a gluing
argument was provided in \cite{DWC}; the same procedure appears in
\cite{Kar}. The path approach to symplectic groupoids \cite{CaFe,
CrFe2} gives a different proof. Here we present a direct, global,
finite dimensional proof, based on the philosophy of
\textbf{contravariant geometry}: in Poisson geometry, the relevant
tangent directions come from the cotangent bundle $T^*M$ via the
bundle map
\[ \pi^{\sharp}: T^*M \rmap TM\]
which is just $\pi$ converted into a linear map ($\beta(\pi^{\sharp}(\alpha))= \pi(\alpha, \beta)$). More on contravariant geometry can be found in the next section.
We will use a contravariant version of the notion of spray. In the following definition, for $t> 0$,
we denote by $m_t: T^*M\rmap T^*M$ the fiberwise multiplication by $t$.

\begin{definition} A \textbf{Poisson spray} on the Poisson manifold $(M,\pi)$ is a vector field $\mathcal{V}_{\pi}$ on $T^*M$ satisfying the following two properties:
\begin{enumerate}
\item[(1)] $(dp)_{\xi}(\mathcal{V}_{\pi, \xi})= \pi^{\sharp}(\xi)$ for all $\xi\in T^*M$.
\item[(2)] $m_{t}^{*}(\mathcal{V}_{\pi}) = t\mathcal{V}_{\pi}$ for all $t> 0$.
\end{enumerate}
We denote by $\varphi_t$  the flow of $\mathcal{V}_{\pi}$.
\end{definition}

A short discussion on Poisson sprays-completely analogous the classical sprays \cite{Lang}- is given in the next section.
Condition (1) means that the integral curves of $\mathcal{V}_{\pi}$ are cotangent curves (see the next section);
it also appears in \cite{Wein2} under the name ``second order differential equation''.
Our main result is the following.

\begin{theorem} \label{main-theorem} Given the Poisson manifold $(M, \pi)$ and a contravariant spray $\mathcal{V}_{\pi}$,
there exists an open neighborhood $\mathcal{U}\subset T^*M$ of the
zero-section so that
\[ \omega:= \int_{0}^{1} (\varphi_t)^*\omega_{\textrm{can}} dt\]
is a symplectic structure on $\mathcal{U}$ and the canonical projection
$p: (\mathcal{U}, \omega)\rmap (M, \pi)$
is a symplectic realization.
\end{theorem}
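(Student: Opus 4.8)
The plan is to establish, in turn, that $\omega$ is closed, that it is nondegenerate on some neighbourhood $\mathcal{U}$ of the zero section, and that $p|_{\mathcal{U}}$ is a Poisson submersion; along the way one shrinks $\mathcal{U}$ freely, and one may arrange that $\varphi_t$ is defined on it for $t\in[0,1]$ and that it is invariant under the fibrewise dilations $m_s$, $s\in[0,1]$. Closedness is immediate, since $d$ commutes with $(\varphi_t)^*$ and with $\int_0^1(\cdot)\,dt$ while $\omega_{\mathrm{can}}$ is closed. A preliminary observation used repeatedly: condition (2) makes $\mathcal{V}_\pi$ homogeneous of degree $1$ for the $m_s$, and condition (1) at $\xi=0$ gives $(dp)(\mathcal{V}_{\pi,0_x})=0$; together these force $\mathcal{V}_\pi$ to vanish along the zero section, so $\varphi_t$ fixes the zero section pointwise.

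Consequently $(d\varphi_t)_{0_x}$ is a linear endomorphism of $T_{0_x}(T^*M)\cong T_xM\oplus T^*_xM$, and linearising the flow equations of $\mathcal{V}_\pi$ along $p=0$ gives $(d\varphi_t)_{0_x}(v,\beta)=(v+t\,\pi^\sharp(\beta),\beta)$. Substituting into $(\varphi_t)^*\omega_{\mathrm{can}}$ and integrating over $t\in[0,1]$ yields
\[ \omega_{0_x}\big((v,\beta),(w,\gamma)\big)=\langle\gamma,v\rangle-\langle\beta,w\rangle+\pi_x(\beta,\gamma), \]
which is nondegenerate (set $\gamma=0$ to get $\beta=0$, then $v=0$). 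Since nondegeneracy is an open condition, $\omega$ is symplectic on a neighbourhood $\mathcal{U}$ of the zero section; and the same computation identifies $(dp)_{0_x}$ with the projection $T_xM\oplus T^*_xM\to T_xM$, so after shrinking $p|_{\mathcal{U}}$ is a submersion.

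It remains to show $p$ is a Poisson map, i.e.\ that the endomorphism $Q(\xi):=(dp)_\xi\circ\omega^\sharp_\xi\circ(dp)^*_\xi$ of $T^*_{p(\xi)}M$ equals $\pi^\sharp_{p(\xi)}$; the zero-section formula for $\omega$ above gives $Q=\pi^\sharp$ on $M\subset T^*M$ by a direct check. To transport this to all of $\mathcal{U}$ I would use the scaling symmetry of the spray: condition (2) gives $\varphi_t\circ m_s=m_s\circ\varphi_{ts}$, so, using $(m_s)^*\omega_{\mathrm{can}}=s\,\omega_{\mathrm{can}}$,
\[ (m_s)^*\omega=\int_0^s(\varphi_u)^*\omega_{\mathrm{can}}\,du=:\omega^{(s)}. \]
Since $p\circ m_s=p$ and $m_s$ is a symplectomorphism of $(\mathcal{U},\omega^{(s)})$ onto its image in $(\mathcal{U},\omega)$, one gets $Q(m_s\eta)=(dp)_\eta\circ(\omega^{(s)})^\sharp_\eta\circ(dp)^*_\eta=:R_s(\eta)$. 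Hence it suffices to prove that $R_s(\eta)$ is independent of $s$ for each fixed $\eta$: then $Q$ is constant along every dilation ray $\{m_s\eta:s\in(0,1]\}$, and letting $s\to0$ and using continuity of $Q$ it must equal its zero-section value $\pi^\sharp$.

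To see what the $s$-independence asks for, differentiate the defining relation $\iota_{X_s}\omega^{(s)}=p^*\beta$ (with $X_s=(\omega^{(s)})^\sharp(p^*\beta)$) in $s$, using $\partial_s\omega^{(s)}=(\varphi_s)^*\omega_{\mathrm{can}}$: a short computation shows that $(dp)_\eta(\partial_s X_s)=\partial_s\big(R_s(\eta)\beta\big)$ vanishes for all $\beta$ exactly when $(\varphi_s)^*\omega_{\mathrm{can}}$ vanishes on the $n$-plane $\{(\omega^{(s)})^\sharp(p^*\beta):\beta\in T^*_{p(\eta)}M\}$ — equivalently, when $(d\varphi_s)$ sends the $\omega^{(s)}$-orthogonal complement of $\ker(dp)_\eta$ to a Lagrangian subspace for $\omega_{\mathrm{can}}$. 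This Lagrangian property is the crux, and the step I expect to cause the real difficulty; it is precisely where condition (1) — equivalently, that the curves $t\mapsto\varphi_t(\xi)$ are cotangent curves, $\tfrac{d}{dt}(p\circ\varphi_t)=\pi^\sharp\circ\varphi_t$ — must be brought to bear. Along the zero section it follows from the same linearisation (there the plane is carried onto the Lagrangian cotangent fibre $T^*_xM$), and the remaining work is to show it persists everywhere on $\mathcal{U}$.
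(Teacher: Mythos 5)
Your first two steps (closedness, the computation of $\omega$ along the zero section, nondegeneracy near it) are correct and agree with the paper, and your scaling identities $\varphi_t\circ m_s=m_s\circ\varphi_{ts}$ and $(m_s)^*\omega=\int_0^s(\varphi_u)^*\omega_{\mathrm{can}}\,du$ are valid consequences of condition (2). But the proof has a genuine gap exactly where you flag it: the claim that $(d\varphi_s)$ carries the $\omega^{(s)}$-orthogonal complement of $\ker(dp)_\eta$ onto an $\omega_{\mathrm{can}}$-Lagrangian subspace is not proved, and it is not a technicality --- it \emph{is} the theorem. This claim is essentially the identity $\mathcal{F}(p)^{\perp}=\mathcal{F}(p_1)$ (applied at time $s$ in place of $1$): the $\omega_{\mathrm{can}}$-Lagrangian subspace in question turns out to be the tangent space to the cotangent fibre, i.e.\ $\ker(dp)_{\varphi_s(\eta)}$. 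So your homogeneity argument does not reduce the problem to the zero section; it only reformulates the statement ``$p$ is Poisson at $\eta$'' as an isotropy statement along the ray through $\eta$, and all of the real work remains.

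A structural symptom of the gap: nowhere in your outline does the Jacobi identity $[\pi,\pi]=0$ enter. Every ingredient you use --- the spray conditions (1) and (2), the zero-section formula for $\omega$, nondegeneracy, the dilation symmetry --- makes sense for an arbitrary bivector $\pi$, yet the conclusion that $p$ pushes $\omega^{-1}$ down to $\pi$ fails for non-Poisson bivectors (as Weinstein already observed in the local case, and as the paper recalls). Hence any correct completion must invoke the Jacobi identity precisely inside your unproved Lagrangian claim. In the paper this happens through contravariant connections: one fixes a torsion-free connection $\nabla$, shows that the components $(\overline{v}_t,\theta_{v_t})$ of $v_t=(\varphi_t)_*v_0$ satisfy $\overline{\nabla}_a\overline{v}=\pi^{\sharp}\theta_v$ along the cotangent path $a(t)=\varphi_t(\xi)$, and derives the closed formula
\[
\omega(v_0,w_0)=\bigl(\langle\widetilde{\theta}_w,\overline{v}\rangle-\langle\widetilde{\theta}_v,\overline{w}\rangle-\pi(\widetilde{\theta}_v,\widetilde{\theta}_w)\bigr)\big|_0^1 ,
\]
whose proof uses $\overline{\nabla}_\alpha(\pi^{\sharp}\beta)=\pi^{\sharp}(\overline{\nabla}_\alpha\beta)$ --- the one place where $[\pi,\pi]=0$ is used. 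From that formula both the orthogonality statement and the Poisson property of $p$ follow by choosing boundary conditions for $\widetilde{\theta}_v,\widetilde{\theta}_w$. To salvage your approach you would need to supply an equivalent computation along the flow lines; the dilation trick by itself cannot do it.
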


\begin{example} \rm \ When $M= U\subset \mathbb{R}^n$ open, denoting by $\pi_{i, j}$ the components of $\pi$, the simplest contravariant spray is
$\mathcal{V}_{\pi}(x, y)= \sum_{p, q} \pi_{p, q}(x) y_p\frac{\partial}{\partial x_q}$, where $x$ are the coordinates of $U$ and $(x, y)$ the induced coordinates
on $T^*U$. It is not difficult to see that the resulting $\omega$ coincides with the one constructed by A.Weinstein \cite{Wein}.
\end{example}

One may expect that the proof is ``just a computations''. Although
that is true in principle, the computation is more subtle then one
may believe. In particular, we will make use of the principle of
``contravariant geometry'' which is intrinsic to Poisson geometry.
The fact that the proof cannot be so trivial and hides some
interesting geometry behind was already observed in the local case
by A. Weinstein in \cite{Wein}: the notion of contravariant spray,
its existence, the formula for $\omega$ (giving a symplectic form on
an small enough $\mathcal{U}$)- they all make sense for any bivector
$\pi$, Poisson or not. But the fact that the push-down of (the
inverse of) $\omega$ is $\pi$ can only hold for Poisson bivectors.
Nowadays, with all the insight into symplectic groupoids, we can say
that we have the full geometric understanding of this theorem; in
particular, it can be derived from the path-approach to symplectic
groupoids of \cite{CaFe} and the resulting construction of local
symplectic groupoids \cite{CrFe2}. However, it is clearly worth
giving a more direct, global argument.

Let us start already with the first steps of the proof. Let's first
look at $\omega$ on vectors tangent to $T^*M$ at zero's $0_x\in
T^{*}_{x}M$ ($x\in M$). At such points one has a canonical
isomorphism $T_{0_x}(T^*M)\cong T_xM\oplus T_{x}^{*}M$ denoted
$v\mapsto (\overline{v}, \theta_v)$, and the canonical symplectic
form is
\begin{equation}\label{omega-can-0}
\omega_{\textrm{can}, 0_x}(v, w)= \langle \theta_{w},
\overline{v}\rangle -\langle \theta_{v}, \overline{w}\rangle.
\end{equation}
From the properties of $\mathcal{V}_{\pi}$ it follows that $\varphi_t(0_x)= 0_x$ for all $t$ and all $x$, hence $\varphi_{t}$ is well defined on a neighborhood of the zero-section, for all $t\in [0, 1]$.
From the same properties it also follows that
\[ (d\varphi_t)_{0_x} : T_{0_x}(T^*M)\rmap T_{0_x}(T^*M) \]
is, in components,
\[ (\overline{v}, \theta_v)\mapsto (\overline{v}+ t\pi^{\sharp}\theta_{v}, \theta_{v}) .\]
From the definition of $\omega$ and the previous formula for $\omega_{\textrm{can}}$ we deduce that
\begin{equation}\label{omega-0}
\omega_{0_x}(v, w)= \langle \theta_{w}, \overline{v}\rangle- \langle
\theta_{v}, \overline{w}\rangle+ \pi(\theta_{v}, \theta_{w})
\end{equation}
for all $v, w\in T_{0_x}(T^*M)$. This implies that $\omega$ is nondegenerate at all zero's $0_x\in T^*M$. Hence we can find a neighborhood
$\mathcal{U}$ of the zero-section in $T^*M$ such that $\varphi_t$ is defined on $\mathcal{U}$ for all $t\in [0, 1]$ and $\omega|_{\mathcal{U}}$ is nondegenerate
(hence symplectic). Fixing such an $\mathcal{U}$, we still have to show that the map
\[ (dp)_{\xi}: T_{\xi}(\mathcal{U})\rmap T_{p(\xi)}M \]
sends the bivector associated to $\omega$ to $\pi$, for all $\xi\in
\mathcal{U}$. The fact that this holds at all $\xi= 0_x$ follows
immediately from the previous expression for $\omega_{0_x}$. Our job
is to show that it holds at all $\xi$'s. Although it will not be
used in this paper, it is worth mentioning here Libermann's result
on symplectically complete foliation, concerning the following
question: given a symplectic manifold $(\mathcal{U}, \omega)$ and a
submersion $p: \mathcal{U}\rmap M$, when can one push-down the
bivector associated to $\omega$ to a bivector on $M$? Libermann's
theorem (see e.g. Theorem 1.9.7 in \cite{DZ}) gives us the following
characterization: considering the involutive distribution
$\mathcal{F}(p)\subset T\mathcal{U}$ tangent to the fibers of $p$,
its symplectic orthogonal with respect to $\omega$,
$\mathcal{F}(p)^{\perp}\subset T\mathcal{U}$, must be involutive.
What happens in our case is the following:
\begin{equation}\label{main-lemma}
\mathcal{F}(p)^{\perp}= \mathcal{F}(p_1),
\end{equation}
where $\mathcal{F}(p_1)$ is the (involutive!) distribution tangent to the fibers of $p_1:= p\circ \varphi_1: \mathcal{U}\rmap M$.
This will be proven in the last section. However, it turns out that the ingredients needed to prove this equality can be used
to show directly that $p$ is a Poisson map, without having
to appeal to Libermann's result.

\section{Contravariant geometry}

As we have already mentioned, the basic idea of contravariant geometry in Poisson geometry is that of replacing the tangent bundle $TM$ by $T^*M$. The two are related
by the bundle map $\pi^{\sharp}$.  But the main structure that makes everything work is the presence of a Lie bracket $[-, -]_{\pi}$ on $\Gamma(T^*M)$, which is the contravariant analogue of the Lie bracket on vector fields
(the two brackets being related via $\pi^{\sharp}$). It is uniquely determined by the condition
\[ [df, dg]_{\pi}=d\{f, g\}\]
and the Leibniz identity
\[ [\alpha, f\beta]_{\pi}= f[\alpha, \beta]_{\pi}+ L_{\alpha}(f) \beta \]
for all $\alpha, \beta\in \Omega^1(M)$, where $L_{\alpha}:= L_{\pi^{\sharp}\alpha}$ is the Lie derivative along the ordinary vector field $\pi^{\sharp}(\alpha)$
associated to $\alpha$. In other words, contravariant geometry is the geometry associated to the Lie algebroid $(T^*M, \pi^{\sharp}, [-, -]_{\pi})$.
Here are some examples of notions that are contravariant to the usual ones (see e.g. \cite{CrFe2, Fer}).

\vspace*{.1in}
A \textbf{contravariant connection} on a vector bundle $E$ over $M$ is a a bilinear map
\[\nabla: \Gamma(T^*M)\times \Gamma(E), \ \ (\alpha, s)\mapsto \nabla_{\alpha}(s)\]
satisfying
\[ \nabla_{f\alpha}(s)= f\nabla_{\alpha}(s), \ \ \nabla_{\alpha}(fs)= f\nabla_{\alpha}(s)+ L_{\alpha}(f) s\]
for all $f\in C^{\infty}(M)$, $\alpha\in \Gamma(T^*M)$, $s\in \Gamma(E)$. The standard operations with connections (duals, tensor products, etc) have an obvious contravariant
version. 

\vspace*{.1in}
A \textbf{cotangent path} (or contravariant path) is a path $a: [0, 1]\rmap T^*M$ sitting above some path $\gamma: [0, 1]\rmap M$, such that
\[ \pi^{\sharp}(a(t))= \frac{d\gamma}{dt}(t).\]
Intuitively, the cotangent path is the pair $(a, \gamma)$ where $\gamma$ is a standard path and the role of $a$ is to encode ``the contravariant derivative of $\gamma$''.
The previous equation says that the contravariant derivative is related to the classical one via $\pi^{\sharp}$.

\vspace*{.1in} Given a contravariant connection $\nabla$ on a vector
bundle $E$, one has a well-defined notion of \textbf{derivative of
sections along cotangent paths}: given a cotangent path $(a,
\gamma)$ and a path $u: [0, 1]\rmap E$ sitting above $\gamma$,
$\nabla_{a}(u)$ is a new path in $E$ sitting above $\gamma$. Writing
$u(t)= s_t(\gamma(t))$ for some time dependent section of $E$,
\[\nabla_{a}(u)= \nabla_{a}(s_t)(x)+ \frac{d  s_t}{dt}(x), \ \ \ \textrm{at}\ x= \gamma(t).\]

\vspace*{.1in}
Given a contravariant connection $\nabla$ on $T^*M$, the \textbf{contravariant torsion} of $\nabla$ is the tensor $T_{\nabla}$ defined by
\[ T_{\nabla}(\alpha, \beta)= \nabla_{\alpha}(\beta)- \nabla_{\beta}(\alpha)- [\alpha, \beta]_{\pi}.\]

\vspace*{.1in} Given a metric $g$ on $T^*M$, one has an associated
\textbf{contravariant Levi-Civita connection}- the unique
contravariant metric connection $\nabla^{g}$ on $T^*M$ whose
contravariant torsion vanishes. The corresponding
\textbf{contravariant geodesics} are defined as usual, as the
(cotangent) curves $a$ satisfying $\nabla_{a}a= 0$. They are the
integral curves of a vector field $\mathcal{V}_{\pi}^{g}$ on $T^*M$,
called the \textbf{contravariant geodesic vector field}. In local
coordinates $(x, y)$ (where $x$ are the coordinates in $M$ and $y$
on the fiber),
\[ \mathcal{V}_{\pi}^g(x, y)= \sum_{p, q} \pi_{p, q}(x)y_p \frac{\partial}{\partial x_q}- \sum \Gamma_{p, q}^{r}(x) y_py_q \frac{\partial}{\partial y_r},\]
where $\Gamma_{p, q}^r$ are the coefficients in
$\nabla_{dx_p}(dx_q)=\sum \Gamma_{p,q}^{r} dx_r$. Geodesics and the
geodesic vector field are actually defined for any contravariant
connection $\nabla$ on $T^*M$, not necessarily of metric type. For
instance, any classical connection $\nabla$ on $T^*M$ induces a
contravariant connection with $\nabla_{\alpha}:=
\nabla_{\pi^{\sharp}\alpha}$ which, in general, is not of metric
type.

Back to our problem, the existence of contravariant sprays is now clear:

\begin{lemma}
Any contravariant geodesic vector field is a contravariant spray.
\end{lemma}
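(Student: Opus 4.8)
The plan is to verify the two defining properties of a Poisson spray directly for the contravariant geodesic vector field $\mathcal{V}_{\pi}^{g}$ (or, more generally, the geodesic vector field of any contravariant connection $\nabla$ on $T^*M$), working in local coordinates $(x,y)$ where the explicit formula
\[
\mathcal{V}_{\pi}^{g}(x, y)= \sum_{p, q} \pi_{p, q}(x)\, y_p \frac{\partial}{\partial x_q}- \sum_{p,q,r} \Gamma_{p, q}^{r}(x)\, y_p y_q \frac{\partial}{\partial y_r}
\]
is available from the preceding discussion. First I would check condition (1): the projection $p: T^*M \to M$ is $(x,y)\mapsto x$, so $(dp)_{\xi}$ kills the $\partial/\partial y_r$ components and sends $\partial/\partial x_q$ to $\partial/\partial x_q$; applying this to the formula above gives $(dp)_{\xi}(\mathcal{V}^g_{\pi,\xi}) = \sum_{p,q}\pi_{p,q}(x) y_p \,\partial/\partial x_q$, which is exactly $\pi^{\sharp}(\xi)$ when $\xi = \sum_p y_p\, dx_p$, by the definition $\beta(\pi^{\sharp}\alpha) = \pi(\alpha,\beta)$. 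This is immediate and essentially a restatement of the cotangent-curve/``second order differential equation'' condition. Alternatively, and more invariantly, this follows from the fact that for a geodesic $a$ lying over $\gamma$ the cotangent-path condition $\pi^{\sharp}(a(t)) = \dot\gamma(t)$ holds by definition of contravariant geodesics.

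Next I would check the homogeneity condition (2), $m_t^{*}(\mathcal{V}_{\pi}^{g}) = t\,\mathcal{V}_{\pi}^{g}$ for $t > 0$. In coordinates $m_t(x,y) = (x, ty)$, so $(dm_t)(\partial/\partial x_q) = \partial/\partial x_q$ and $(dm_t)(\partial/\partial y_r) = t\,\partial/\partial y_r$. The horizontal part $\sum \pi_{p,q}(x) y_p\, \partial/\partial x_q$ is linear in $y$, hence scales by $t$ under pullback by $m_t$; the vertical part $\sum \Gamma^r_{p,q}(x) y_p y_q\, \partial/\partial y_r$ is quadratic in $y$ but $(dm_t)$ contributes an extra factor $t^{-1}$ on the $\partial/\partial y_r$ direction when one pulls back, so altogether it also scales by $t^2 \cdot t^{-1} = t$. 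Hence $\mathcal{V}^g_\pi$ is homogeneous of degree one in the required sense, which is property (2). One should phrase this cleanly, perhaps via the intrinsic remark that the flow of $\mathcal{V}^g_\pi$ satisfies $\varphi_s \circ m_t = m_t \circ \varphi_{ts}$ because rescaling a geodesic by a constant factor in the cotangent direction reparametrizes it by the same factor — this is the contravariant analogue of the classical fact for sprays in \cite{Lang}.

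I do not expect a serious obstacle here; the lemma is a direct check and the main point is to organize it so that it reads as the Poisson-geometric analogue of the classical statement ``geodesic vector fields are sprays.'' The only mild subtlety is making sure the coordinate computation for property (2) is carried out with the correct placement of the factor of $t$ (the vertical coordinates scale, which inverts one power of $t$ when computing the pullback of a vector field), and to note explicitly that the formula for $\mathcal{V}^g_\pi$ and the conclusion do not require $\nabla$ to be metric — the argument uses only that $\mathcal{V}^g_\pi$ has the displayed local form with horizontal part linear in $y$ determined by $\pi^{\sharp}$ and vertical part a homogeneous quadratic in $y$. Thus the same proof shows that the geodesic vector field of an arbitrary contravariant connection on $T^*M$ is a Poisson spray, which together with the existence of contravariant connections (e.g. $\nabla_\alpha := \nabla_{\pi^\sharp\alpha}$ for any classical connection $\nabla$ on $T^*M$, or a contravariant Levi-Civita connection associated to any metric on $T^*M$) establishes the existence of Poisson sprays on every Poisson manifold.
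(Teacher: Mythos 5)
Your verification is correct: both conditions follow from the displayed local formula exactly as you compute (the horizontal part linear in $y$ gives condition (1), and the pullback by $m_t$ rescales the quadratic vertical part by $t^2\cdot t^{-1}=t$, giving condition (2)). The paper states this lemma without proof, treating it as immediate from that same local expression, so your argument is simply the spelled-out version of the intended one; your closing observation that only the homogeneity structure (not the metric property of $\nabla$) is used also agrees with the paper's surrounding remarks.
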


Recall also that (cf. e.g. \cite{CrFe2}) any classical connection $\nabla$ induces two contravariant connections, one on $TM$ and one on $T^*M$, both denoted by
$\overline{\nabla}$:
\[ \overline{\nabla}_{\alpha}(V)= \pi^{\sharp}\nabla_{V}(\alpha)+ [\pi^{\sharp}(\alpha), V], \ \overline{\nabla}_{\alpha}(\beta)= \nabla_{\pi^{\sharp}\beta}(\alpha)+ [\alpha, \beta]_{\pi}.\]
The two are related by the following formula, which follows
immediately from the fact that $\pi^{\sharp}$ is a Lie algebra map
from $(\Omega^1(M), [-, -]_{\pi})$ to the Lie algebra of vector
fields. Note also that this (and its consequences later on) is the
only place where we use that $\pi$ is Poisson.

\begin{lemma} For any classical connection $\nabla$,
\begin{equation}\label{EQ4}
\overline{\nabla}_{\alpha}(\pi^{\sharp}(\beta))=\pi^{\sharp}(\overline{\nabla}_{\alpha}(\beta)).
\end{equation}
\end{lemma}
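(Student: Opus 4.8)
The plan is to prove \eqref{EQ4} by direct substitution: expand each side using the two displayed formulas that define $\overline{\nabla}$ on $TM$ and on $T^*M$, and then observe that what remains is exactly the statement that $\pi^{\sharp}$ intertwines the two brackets. First I would apply the formula $\overline{\nabla}_{\alpha}(V)= \pi^{\sharp}\nabla_{V}(\alpha)+ [\pi^{\sharp}(\alpha), V]$ to the vector field $V= \pi^{\sharp}(\beta)$, obtaining
\[ \overline{\nabla}_{\alpha}(\pi^{\sharp}(\beta))= \pi^{\sharp}\big(\nabla_{\pi^{\sharp}(\beta)}(\alpha)\big)+ [\pi^{\sharp}(\alpha), \pi^{\sharp}(\beta)].\]
Next I would apply $\pi^{\sharp}$ to the formula $\overline{\nabla}_{\alpha}(\beta)= \nabla_{\pi^{\sharp}\beta}(\alpha)+ [\alpha, \beta]_{\pi}$, which gives
\[ \pi^{\sharp}\big(\overline{\nabla}_{\alpha}(\beta)\big)= \pi^{\sharp}\big(\nabla_{\pi^{\sharp}(\beta)}(\alpha)\big)+ \pi^{\sharp}\big([\alpha, \beta]_{\pi}\big).\]
Comparing the two right-hand sides, the terms involving $\nabla$ coincide, so \eqref{EQ4} is equivalent to the single identity $[\pi^{\sharp}(\alpha), \pi^{\sharp}(\beta)]= \pi^{\sharp}\big([\alpha, \beta]_{\pi}\big)$ for all $\alpha, \beta\in \Omega^1(M)$.

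It then remains to justify that identity, i.e.\ that $\pi^{\sharp}\colon (\Omega^1(M), [-,-]_{\pi})\rmap \Gamma(TM)$ is a morphism of Lie algebras. I would reduce it to exact $1$-forms: the difference of its two sides is $C^{\infty}(M)$-bilinear in $(\alpha,\beta)$ — the Leibniz correction terms produced on each side cancel, since $L_{\alpha}= L_{\pi^{\sharp}\alpha}$ and $[-,-]_{\pi}$ was defined precisely so as to satisfy the Leibniz identity recalled in this section — hence it is tensorial, so it suffices to check the identity for $\alpha= df$, $\beta= dg$. There, using $[df, dg]_{\pi}= d\{f, g\}$ and $\pi^{\sharp}(df)= X_f$, it becomes $[X_f, X_g]= X_{\{f, g\}}$, which is the standard reformulation of the Jacobi identity for $\{-,-\}$; this is the only point at which the hypothesis that $\pi$ is Poisson is used.

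In this argument there is no genuine obstacle: every step is a substitution into formulas already recalled, followed by one classical fact. The only place requiring a little care is the reduction of the homomorphism property to exact forms, and even that can be bypassed by simply citing that $\pi^{\sharp}$ is a morphism of Lie algebroids $(T^*M, \pi^{\sharp}, [-,-]_{\pi})\rmap (TM, \mathrm{id}, [-,-])$. I would therefore present the lemma as an immediate corollary of the two defining formulas for $\overline{\nabla}$ together with this known fact, spelling out only the two-line cancellation above.
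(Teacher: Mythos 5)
Your proof is correct and follows exactly the paper's route: the paper gives no separate proof, stating only that the identity ``follows immediately from the fact that $\pi^{\sharp}$ is a Lie algebra map from $(\Omega^1(M), [-,-]_{\pi})$ to the Lie algebra of vector fields,'' which is precisely the cancellation you spell out. Your extra paragraph justifying the morphism property (tensoriality plus the check on exact forms, i.e.\ the Jacobi identity) is a correct elaboration of the classical fact the paper cites.
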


In the next section we will be using a $\nabla$ which is torsion-free; this condition simplifies the computations
because of the following lemma.

\begin{lemma}\label{ConjConn2} If $\nabla$ is torsion-free then,
for any cotangent path $a$ with base path $\gamma$, for any smooth path $\theta$ in $T^*M$ above $\gamma$ and any smooth path $v$ in $TM$ above
$\gamma$,
\[\langle\overline{\nabla}_{a}(\theta),v\rangle+\langle \theta,\overline{\nabla}_{a}(v)\rangle=\frac{d}{dt}\langle \theta, v\rangle.\]
\end{lemma}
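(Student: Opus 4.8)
The plan is to reduce the claimed identity to the ordinary Leibniz rule for a suitable classical connection along the base path $\gamma$, exploiting the torsion-freeness of $\nabla$ to rewrite the two contravariant covariant derivatives along $a$ in terms of the classical ones along $\gamma$. First I would recall the definition of $\overline{\nabla}_a$ on paths. Writing $a(t) = \alpha_t(\gamma(t))$ for a time-dependent one-form $\alpha_t$ (so that $\pi^\sharp\alpha_t = \dot\gamma$ at $\gamma(t)$), and $\theta(t) = \beta_t(\gamma(t))$, $v(t) = V_t(\gamma(t))$ for time-dependent sections $\beta_t \in \Omega^1(M)$, $V_t \in \mathfrak{X}(M)$, the derivative along the cotangent path is, by the formula for derivatives of sections along cotangent paths applied to the contravariant connections $\overline\nabla$ on $T^*M$ and on $TM$,
\[
\overline{\nabla}_a(\theta) = \overline{\nabla}_{\alpha_t}(\beta_t)(\gamma(t)) + \tfrac{d\beta_t}{dt}(\gamma(t)), \qquad
\overline{\nabla}_a(v) = \overline{\nabla}_{\alpha_t}(V_t)(\gamma(t)) + \tfrac{dV_t}{dt}(\gamma(t)).
\]

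Next I would substitute the explicit expressions for $\overline\nabla$ from the displayed formula just before the lemma:
\[
\overline{\nabla}_{\alpha}(\beta) = \nabla_{\pi^\sharp\beta}(\alpha) + [\alpha,\beta]_\pi, \qquad
\overline{\nabla}_{\alpha}(V) = \pi^\sharp\nabla_V(\alpha) + [\pi^\sharp\alpha, V].
\]
The key algebraic point is that the torsion-freeness of the classical connection $\nabla$ lets us rewrite the terms $\nabla_{\pi^\sharp\beta}(\alpha)$ and $[\alpha,\beta]_\pi$. Concretely, since $T_\nabla = 0$ one has $[\alpha,\beta]_\pi = \overline\nabla_\alpha(\beta) - \nabla_{\pi^\sharp\beta}(\alpha)$ — wait, more usefully: the contravariant torsion of $\overline\nabla$ itself is controlled by $T_\nabla$, and when $T_\nabla = 0$ one can show that along the cotangent path $a$ one has $\overline\nabla_a(\beta_t) = \nabla_{\dot\gamma}\beta_t + \tfrac{d}{dt}$-correction expressible purely through $\nabla_{\dot\gamma}$. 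The cleanest route: using $\pi^\sharp\alpha_t = \dot\gamma$ and the torsion-free identity, verify that
\[
\overline{\nabla}_a(\theta) = \tfrac{D^\nabla}{dt}\theta + (\text{terms involving }\nabla\alpha_t\text{ paired against }\theta),
\]
and symmetrically for $v$, where $\tfrac{D^\nabla}{dt}$ is the classical covariant derivative along $\gamma$; the extra terms pair $\theta$ against $v$ through $\nabla\alpha_t$ and cancel in the sum because they appear with opposite signs (this is exactly where torsion-freeness, i.e. the symmetry of $\nabla\alpha_t$ in an appropriate sense, is used). Then the claim follows from the standard compatibility $\tfrac{d}{dt}\langle\theta,v\rangle = \langle\tfrac{D^\nabla}{dt}\theta, v\rangle + \langle\theta, \tfrac{D^\nabla}{dt}v\rangle$, which holds for the dual pair of classical connections.

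I expect the main obstacle to be bookkeeping: correctly identifying which connection ($\nabla$, $\overline\nabla$ on $TM$, $\overline\nabla$ on $T^*M$) acts where, and verifying that the "correction terms" produced when one expands $[\alpha,\beta]_\pi$, $[\pi^\sharp\alpha, V]$, $\nabla_{\pi^\sharp\beta}\alpha$, $\pi^\sharp\nabla_V\alpha$ indeed cancel pairwise in the sum $\langle\overline\nabla_a\theta, v\rangle + \langle\theta, \overline\nabla_a v\rangle$. The conceptual content is small — it is the Leibniz rule for a metric-type pairing — but the cancellation is precisely the manifestation of $T_\nabla = 0$, and getting the signs right in the contravariant setting (where $[-,-]_\pi$ on exact forms is $d\{f,g\}$ and the Leibniz rule involves $L_\alpha = L_{\pi^\sharp\alpha}$) requires care. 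A safe fallback is to check the identity on a local frame $\theta = f\,dx_i$, $v = g\,\partial_{x_j}$ using the coordinate formulas, reducing everything to the known fact that $\langle dx_i, \partial_{x_j}\rangle = \delta_{ij}$ is constant; but I would present the frame-free argument above as the main proof.
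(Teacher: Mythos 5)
Your plan follows essentially the same route as the paper: both write $a$, $\theta$, $v$ via time-dependent sections, substitute the defining formulas for $\overline{\nabla}$ and $[-,-]_{\pi}$, and let the torsion-freeness of $\nabla$ produce the cancellations, the only difference being cosmetic bookkeeping (you group the result as a classical covariant derivative along $\gamma$ plus corrections that cancel in the pairing, while the paper collects the terms directly into $\langle \tfrac{d\Theta}{dt}, V\rangle + \langle \Theta, \tfrac{dV}{dt}\rangle + L_{\pi^{\sharp}A}\langle \Theta, V\rangle$, which is the same quantity). The cancellation you predict does occur, so the plan is sound.
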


\begin{proof}
Choose a  time-dependent 1-form $A= A(t, x)$ such that $a(t)= A(t,
\gamma(t))$ and similarly a time-dependent 1-form $\Theta$
corresponding to $\theta$ and a time-dependent vector field
corresponding to $v$. Applying the definition of the derivatives
$\overline{\nabla}_{a}$ along cotangent paths and then the
definition of $\overline{\nabla}$ we find that the left hand side at
time $t$ coincides with the following expression on $(t, x)$
evaluated at $x= \gamma(t)$:
\begin{equation}\label{inter}
\langle \nabla_{\pi^{\sharp}\Theta}(A)+ [A, \Theta]_{\pi}+
\frac{d\Theta}{dt}, V\rangle+ \langle \Theta,
\pi^{\sharp}\nabla_{V}(A)+ [\pi^{\sharp}A, V]+ \frac{dV}{dt}\rangle
.
\end{equation}
For the two terms involving $\nabla$ we find
\begin{eqnarray*}
&& \langle \nabla_{\pi^{\sharp}\Theta}(A), V\rangle + \langle \Theta, \pi^{\sharp}\nabla_{V}(A)\rangle =\\
&&\phantom{12}= \langle \nabla_{\pi^{\sharp}\Theta}(A), V\rangle - \langle \nabla_{V}(A), \pi^{\sharp}\Theta \rangle =\\
&&\phantom{12}= L_{\pi^{\sharp}\Theta}\langle A, V \rangle- \langle A, \nabla_{\pi^{\sharp}\Theta}(V) \rangle- L_{V}\langle A, \pi^{\sharp}\Theta \rangle + \langle A, \nabla_{V}(\pi^{\sharp}\Theta)\rangle =\\
&&\phantom{12}= L_{\pi^{\sharp}\Theta}\langle A, V \rangle-
L_{V}\langle A, \pi^{\sharp}\Theta \rangle+ \langle A, [V,
\pi^{\sharp}\Theta]\rangle,
\end{eqnarray*}
where we have used the antisymmetry of $\pi$, then we passed from $\nabla$ on $T^*M$ to its dual on $TM$ and then
we used that $\nabla$ is torsion-free. For the term in (\ref{inter}) containing $[-, -]_{\pi}$, using the definition of this bracket we find
\begin{eqnarray*}
&&  \langle [A, \Theta]_{\pi}, V\rangle= \langle L_{\pi^{\sharp}A}(\Theta)-L_{\pi^{\sharp}\Theta}(A)- d\pi(A, \Theta), V\rangle = \\
&&\phantom{12}= L_{\pi^{\sharp}A}\langle \Theta, V\rangle- \langle
\Theta, [\pi^{\sharp}A, V]\rangle - L_{\pi^{\sharp}\Theta}\langle A,
V\rangle + \langle A, [\pi^{\sharp}\Theta, V]\rangle - L_V(\pi(A,
\Theta)).
\end{eqnarray*}
Plugging the last two expressions into (\ref{inter}) we find
\[ \langle \frac{d\Theta}{dt}, V\rangle + \langle \Theta, \frac{dV}{dt}\rangle+ L_{\pi^{\sharp}A}\langle \Theta, V\rangle .\]
As an expression on $(t, x)$, when evaluated at $x=\gamma(t)$, since
$\pi^{\sharp}A= \frac{d\gamma}{dt}$, we find precisely the right
hand side of the expression from the statement.
\end{proof}

\section{A different formula for $\omega$}

In this section we give another description of $\omega$. The
resulting formula is a generalization of the formula (\ref{omega-0})
from zero's $0_x$ to arbitrary $\xi$'s in $T^*M$. It will depend on
a connection $\nabla$ on $TM$ which is used in order to handle
tangent vectors to $T^*M$. Hence, from now on, we fix such a
connection which we assume to be torsion free. With respect to
$\nabla$, any tangent vector $v\in T_{\xi}(T^*M)$ is determined by
the tangent vector induced on $M$ and by its vertical component
\[ \overline{v}= (dp)_{\xi}(v)\in T_{p(\xi)}M , \ \ \theta_v= v- \textrm{hor}_{\xi}(\overline{v})\in T_{p(\xi)}^{*}M .\]
Of course, when $\xi= 0_x$, these coincide with the components mentioned in the introduction. The fact that
$\nabla$ is torsion-free ensures the following generalization of the formula (\ref{omega-0}) for $\omega_{\textrm{can}}$ at arbitrary $\xi$'s.

\begin{lemma}\label{torsion-free1} If $\nabla$ is torsion-free then, for any $v, w\in T_{\xi}(T^*M)$,
\begin{equation}\label{Omega_can}
\omega_{\mathrm{can}}(v, w)=\langle \theta_w, \overline{v}\rangle-\langle
\theta_v,\overline{w}\rangle,
\end{equation}
\end{lemma}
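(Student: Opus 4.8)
The plan is to compute $\omega_{\mathrm{can}}=-d\lambda$, where $\lambda$ is the tautological $1$-form on $T^*M$ (this sign makes the asserted formula specialize at $\xi=0_x$ to (\ref{omega-can-0})), by choosing convenient extensions of $v$ and $w$ to vector fields and applying the Cartan formula $d\lambda(V,W)=V(\lambda(W))-W(\lambda(V))-\lambda([V,W])$.

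Concretely, I would set $x=p(\xi)$, extend $\overline{v},\overline{w}\in T_xM$ to vector fields $X,Y$ on $M$ and $\theta_v,\theta_w\in T_x^*M$ to $1$-forms $\alpha,\beta$; then, using $\nabla$, let $\widetilde{X}$ be the horizontal lift of $X$ to $T^*M$ and $\alpha^{\mathrm{v}}$ the vertical lift of $\alpha$, and put $V=\widetilde{X}+\alpha^{\mathrm{v}}$, $W=\widetilde{Y}+\beta^{\mathrm{v}}$. By the very definition of the components $\overline{(\,\cdot\,)}$ and $\theta_{(\,\cdot\,)}$ one has $V_\xi=v$ and $W_\xi=w$, so it suffices to evaluate $d\lambda(V,W)$ at $\xi$. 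For this I would use the fiberwise linear function $\ell_Z\in C^\infty(T^*M)$ attached to a vector field $Z$ on $M$, $\ell_Z(\eta)=\langle\eta,Z\rangle$, together with three elementary properties: (i) $\lambda(V)=\ell_X$, since $\lambda_\eta(V_\eta)=\langle\eta,(dp)_\eta V_\eta\rangle=\langle\eta,X\rangle$; (ii) $\alpha^{\mathrm{v}}(\ell_Y)(\xi)=\langle\theta_v,\overline{w}\rangle$, because on the fiber $T_x^*M$ the function $\ell_Y$ is the linear function $\eta\mapsto\langle\eta,Y_x\rangle$; and (iii) $\widetilde{X}(\ell_Y)=\ell_{\nabla_X Y}$, which is exactly the compatibility of the dual connections on $TM$ and $T^*M$ with the pairing — one differentiates $\langle\eta(t),Y(\gamma(t))\rangle$ along a curve $\eta(t)$ that is $\nabla$-parallel over a $\gamma$ with $\dot\gamma(0)=X_x$. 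Finally, $\widetilde{X},\widetilde{Y}$ are $p$-related to $X,Y$ and the vertical lifts are $p$-related to $0$, so $V,W$ are $p$-related to $X,Y$; hence $(dp)_\xi[V,W]_\xi=[X,Y]_x$ and $\lambda([V,W])(\xi)=\langle\xi,[X,Y]\rangle$.

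Assembling these in the Cartan formula and evaluating at $\xi$ gives
\[
d\lambda(v,w)=\langle\xi,\ \nabla_X Y-\nabla_Y X-[X,Y]\rangle+\langle\theta_v,\overline{w}\rangle-\langle\theta_w,\overline{v}\rangle ,
\]
whose first term is $\langle\xi,T_\nabla(\overline{v},\overline{w})\rangle$, with $T_\nabla$ the (tensorial) torsion of $\nabla$. Under the hypothesis $T_\nabla=0$ this term vanishes, and $\omega_{\mathrm{can}}=-d\lambda$ then yields (\ref{Omega_can}). The only step requiring genuine care is (iii): it is the identity that brings $\nabla$ into the computation, and it makes transparent that only the torsion of $\nabla$ — its antisymmetric part — can obstruct (\ref{Omega_can}); everything else is bookkeeping about horizontal and vertical lifts. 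One could equally well verify (\ref{Omega_can}) directly in induced coordinates $(x^i,p_i)$, where $\omega_{\mathrm{can}}=\sum_i dx^i\wedge dp_i$, the covector $\theta_v$ has components $b_j-a^i\Gamma^k_{ij}p_k$ for $v=a^i\partial_{x^i}+b_j\partial_{p_j}$, and the difference between the two sides of (\ref{Omega_can}) collapses to a sum of terms $p_k(\Gamma^k_{ij}-\Gamma^k_{ji})$ contracted against $\overline{v}$ and $\overline{w}$, which is zero precisely when $\nabla$ is torsion-free.
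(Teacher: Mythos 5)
Your proof is correct and follows essentially the same route as the paper: decompose tangent vectors into horizontal and vertical parts and observe that the only possible obstruction to (\ref{Omega_can}) is the failure of the horizontal distribution to be Lagrangian, which is measured exactly by the torsion of $\nabla$. The paper simply asserts that torsion-freeness makes $H$ Lagrangian and stops there, whereas your Cartan-formula computation of $-d\lambda$ on horizontal and vertical lifts actually proves that assertion (the term $\langle\xi, T_\nabla(\overline{v},\overline{w})\rangle$ being $-\omega_{\mathrm{can}}$ restricted to $H$), so your argument is a complete and correctly signed version of the paper's one-line proof.
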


\begin{proof}  Since $\nabla$ is torsion free, it follows that the associated horizontal distribution $H\subset T(T^*M)$ is Lagrangian with respect to $\omega_{\textrm{can}}$ and then
the formula follows.
\end{proof}

To establish the generalization of (\ref{omega-0}) to arbitrary
$\xi$'s, we start with a tangent vector
\[ v_0\in T_{\xi}(T^*M),\]
with $\xi\in T^*M$ fixed with the property that $\varphi_t(\xi)$ is defined up to $t= 1$. Consider
\[ a: [0, 1]\rmap T^*M, \ a(t)= \varphi_t(\xi) \]
which, from the properties of $\mathcal{V}_{\pi}$, is a cotangent
path; we denote by $\gamma= p\circ a$ its base path. Pushing $v$ by
$\varphi_t$ we obtain a path
\begin{equation}\label{v-t}
t\mapsto v_t:= (\varphi_t)_*(v_0)\in T_{a(t)}(T^*M).
\end{equation}
Taking the components with respect to $\nabla$, we obtain two paths above $\gamma$, one in $TM$ and one in $T^*M$
\[ t\mapsto \overline{v}_t\in T_{\gamma(t)}M, \  \ t\mapsto \theta_{v_t}\in T_{\gamma(t)}^{*}M .\]
We denote these paths by $\overline{v}$ and $\theta_v$. They are related in the following way:

\begin{lemma} $\overline{\nabla}_a \overline{v}=\pi^{\sharp}\theta_{v}$.
\end{lemma}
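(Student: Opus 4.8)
The plan is to realise the vector $v_0$ by an honest variation of the base point and then to differentiate the cotangent‑path equation in the variation parameter. Throughout, $\tfrac{D}{dt}$, $\tfrac{D}{\partial s}$, $\tfrac{D}{\partial t}$ denote covariant differentiation of a (one‑ or two‑parameter) path with respect to the fixed torsion‑free connection $\nabla$, induced on whichever bundle is needed ($TM$, $T^*M$, or $\mathrm{Hom}(T^*M,TM)$).

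First I record the form of $\overline{\nabla}_a$ on ordinary paths. Writing a path $V$ in $TM$ above $\gamma$ as $V(t)=W_t(\gamma(t))$ for a time‑dependent vector field $W_t$, and choosing a time‑dependent $1$‑form $A$ with $A(t,\gamma(t))=a(t)$, the definition of the derivative along a cotangent path together with $\overline{\nabla}_\alpha(X)=\pi^{\sharp}\nabla_X(\alpha)+[\pi^{\sharp}\alpha,X]$ and the torsion‑free identity $[\pi^{\sharp}A,W_t]=\nabla_{\pi^{\sharp}A}W_t-\nabla_{W_t}(\pi^{\sharp}A)$ give, at $x=\gamma(t)$,
\[\overline{\nabla}_aV=\bigl(\nabla_{\pi^{\sharp}A}W_t+\partial_tW_t\bigr)+\bigl(\pi^{\sharp}\nabla_{W_t}(A)-\nabla_{W_t}(\pi^{\sharp}A)\bigr).\]
Since $a$ is a cotangent path, $\pi^{\sharp}A(\gamma(t))=\dot\gamma(t)$, so the first bracket is $\tfrac{D}{dt}V$, while the second bracket is $-(\nabla_{V(t)}\pi^{\sharp})(a(t))$ because $\nabla_{(\cdot)}\pi^{\sharp}$ is tensorial ($\pi^{\sharp}$ being viewed as a section of $\mathrm{Hom}(T^*M,TM)$). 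Thus $\overline{\nabla}_aV=\tfrac{D}{dt}V-(\nabla_{V}\pi^{\sharp})(a)$, and, taking $V=\overline v$, it suffices to prove
\[\pi^{\sharp}(\theta_{v_t})=\frac{D}{dt}\overline v_t-(\nabla_{\overline v_t}\pi^{\sharp})(a(t)).\]

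For this, pick a smooth curve $s\mapsto\xi_s$ in $T^*M$ with $\xi_0=\xi$ and $\tfrac{d}{ds}\big|_{s=0}\xi_s=v_0$; since the flow domain is open and $\varphi_t(\xi)$ exists for $t\in[0,1]$, the paths $a_s(t):=\varphi_t(\xi_s)$ are defined for $|s|$ small and $t\in[0,1]$, sit above $\gamma_s:=p\circ a_s$, and each is a cotangent path (the argument is the one already used for $a$, via property (1) of $\mathcal V_{\pi}$):
\[\pi^{\sharp}(a_s(t))=\partial_t\gamma_s(t).\]
Differentiating $a_s(t)=\varphi_t(\xi_s)$ in $s$ at $0$ gives $\partial_s\big|_{s=0}a_s(t)=(\varphi_t)_*v_0=v_t$, hence $\overline v_t=\partial_s\big|_{s=0}\gamma_s(t)$, and—since the $\nabla$‑vertical part of the velocity of a curve of covectors is its covariant derivative—$\theta_{v_t}=\tfrac{D}{\partial s}\big|_{s=0}a_s(t)$. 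Now differentiate the displayed equation in $s$ at $s=0$: torsion‑freeness lets one interchange the covariant $s$‑ and $t$‑derivatives of $(s,t)\mapsto\gamma_s(t)$, so the $\partial_t\gamma_s(t)$ side yields $\tfrac{D}{dt}\overline v_t$, while the Leibniz rule for the bundle map $\pi^{\sharp}$ along $s\mapsto a_s(t)$ gives
\[\frac{D}{\partial s}\Big|_{s=0}\pi^{\sharp}(a_s(t))=(\nabla_{\overline v_t}\pi^{\sharp})(a(t))+\pi^{\sharp}\Bigl(\frac{D}{\partial s}\Big|_{s=0}a_s(t)\Bigr)=(\nabla_{\overline v_t}\pi^{\sharp})(a(t))+\pi^{\sharp}(\theta_{v_t}).\]
Equating the two sides gives exactly the required identity, hence $\overline{\nabla}_a\overline v=\pi^{\sharp}\theta_v$.

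The only genuinely delicate point is the identification $\theta_{v_t}=\tfrac{D}{\partial s}\big|_{s=0}a_s(t)$, i.e. that the $\nabla$‑vertical component of the velocity of a curve of sections of $T^*M$ is its covariant derivative; everything else is bookkeeping with a torsion‑free connection. Note in particular that neither property (2) of the spray nor the Jacobi identity for $\pi$ enters here, in agreement with the remark in the Introduction that this part of the construction makes sense for an arbitrary bivector. (Alternatively, the identity can be checked by a direct computation in local coordinates $(x,y)$ on $T^*M$: expand $v_t$ in the frame $\{\partial_{x_i},\partial_{y_k}\}$, use the linearised flow equation $\dot v_t=(d\mathcal V_{\pi})_{a(t)}(v_t)$ together with the cotangent‑path equation, and observe that the $\partial\pi$‑ and Christoffel‑terms cancel in pairs by antisymmetry of $\pi$.)
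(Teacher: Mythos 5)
Your proof is correct, but it follows a genuinely different route from the paper's. The paper introduces the Lie derivative $L_{\mathcal{V}_{\pi}}(V)$ of a tangent vector field along $a$, shows that $dp(L_{\mathcal{V}_{\pi}}(V))=-\pi^{\sharp}(\theta_V)+\overline{\nabla}_{a}(\overline{V})$ by treating the vertical and horizontal cases separately (each reduced to a local computation that is asserted rather than written out), and then concludes from the observation that $L_{\mathcal{V}_{\pi}}(v)=0$ because $v_t=(\varphi_t)_*v_0$. You instead first derive the closed, connection-theoretic formula $\overline{\nabla}_a V=\tfrac{D}{dt}V-(\nabla_{V}\pi^{\sharp})(a)$, and then realise $v_0$ by a variation $a_s(t)=\varphi_t(\xi_s)$ of integral curves and differentiate the cotangent-path equation $\pi^{\sharp}(a_s(t))=\partial_t\gamma_s(t)$ in $s$, using the symmetry of second covariant derivatives for torsion-free connections and the connector identification $\theta_{v_t}=\tfrac{D}{\partial s}\big|_{s=0}a_s(t)$. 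Both arguments encode the same geometric fact -- that $v_t$ is the linearisation of the flow -- but yours replaces the paper's two unproved local computations with global covariant-calculus identities, so it is more self-contained and makes visible exactly where torsion-freeness enters (twice, in the formula for $\overline{\nabla}_a$ and in the interchange of covariant $s$- and $t$-derivatives); the paper's version is shorter and produces as a by-product the decomposition of $dp(L_{\mathcal{V}_{\pi}}V)$ for an arbitrary $V$ along $a$. Your closing remarks -- that neither property (2) of the spray nor the Jacobi identity is used here -- are also consistent with the paper, which invokes the Poisson condition only through equation (\ref{EQ4}) at the end of the proof of Lemma \ref{frmla}.
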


\begin{proof} We start with one remark on derivatives along vector fields. For any tangent vector $V$ to $\mathcal{U}$ along $a$, $t\mapsto V(t)\in T_{a(t)}\mathcal{U}$,
one has the Lie derivative of $V$ along $\mathcal{V}_{\pi}$, again a tangent vector along $a$, defined by
\[ L_{\mathcal{V}_{\pi}}(V)(t)= \frac{d}{ds}|_{s= 0}  (d\varphi_{-s})_{a(s+t)}(V(s+t)) \in T_{a(t)}\mathcal{U}.\]
We have the following two remarks:
\begin{enumerate}
\item For vertical $V$'s, i.e. coming from a 1-form $\theta$  on $M$ along $\gamma$
$dp(L_{\mathcal{V}_{\pi}}(V))= -\pi^{\sharp}(\theta)$. This follows immediately from the
first property of the spray (e.g. by a local computation).
\item For horizontal $V$'s, $(dp)(L_{\mathcal{V}_{\pi}}(V))= \overline{\nabla}_{a}(\overline{V})$, where $\overline{V}= (dp)(V)$ is a tangent vector to $M$ along $\gamma$. To check this, one may assume that
$V$ is a global vertical vector field on $M$ and one has to show that $(dp)_{\eta}(L_{\mathcal{V}_{\pi}}(V))= \overline{\nabla}_{\eta}(\overline{V})$ for all $\eta\in T^*\mathcal{U}$. Again, this follows immediately by a local computation.
\end{enumerate}
Hence, for an arbitrary $V$ (along $a$), using its components $(\overline{V}, \theta_V)$,
\[ dp(L_{\mathcal{V}_{\pi}}(V)) = -\pi^{\sharp}(\theta_V) + \overline{\nabla}_{a}(\overline{V}).\]
Finally, remark that for our $v$ from (\ref{v-t}), $L_{\mathcal{V}_{\pi}}(v)= 0$.
\end{proof}

We have the following version of (\ref{omega-0}) at arbitrary $\xi$'s in $\mathcal{U}$- a small enough neighborhood
of the zero-section in $T^*M$ on which $\omega$ is well-defined.

\begin{lemma}\label{frmla} Let $\xi\in \mathcal{U}$, $v_0, w_0 \in T_{\xi}(T^*M)$.
Let $v=v_t$ as before and let $\widetilde{\theta}_{v}$ be a path in $T^*M$, solution of the differential equation
\begin{equation}\label{diff-eq}
\nabla_{a}(\widetilde{\theta}_v)= \theta_v .
\end{equation}
Similarly, consider $w= w_t$ and $\widetilde{\theta}_{w}$ corresponding to $w_0$. Then
\[ \omega(v_0, w_0)= (\langle \widetilde{\theta}_w, \overline{v}\rangle - \langle \widetilde{\theta}_v, \overline{w}\rangle- \pi(\widetilde{\theta}_{v},\widetilde{\theta}_w) )|_{0}^{1} .\]
\end{lemma}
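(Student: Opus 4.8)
The plan is to compute $\omega(v_0,w_0) = \int_0^1 (\varphi_t^*\omega_{\mathrm{can}})(v_0,w_0)\,dt = \int_0^1 \omega_{\mathrm{can},a(t)}(v_t,w_t)\,dt$ by applying Lemma~\ref{torsion-free1} to the integrand and then recognizing the result as the derivative of the claimed boundary expression. First I would use \eqref{Omega_can} to write
\[
\omega_{\mathrm{can}}(v_t,w_t) = \langle \theta_{w_t}, \overline{v}_t\rangle - \langle \theta_{v_t},\overline{w}_t\rangle,
\]
so that everything is phrased in terms of the component paths $\overline{v},\theta_v,\overline{w},\theta_w$ above $\gamma$, together with the auxiliary paths $\widetilde\theta_v,\widetilde\theta_w$ introduced via the linear ODE \eqref{diff-eq}. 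The strategy is then to show that the integrand equals $\frac{d}{dt}F(t)$, where $F(t) = \langle \widetilde\theta_w,\overline{v}\rangle - \langle\widetilde\theta_v,\overline{w}\rangle - \pi(\widetilde\theta_v,\widetilde\theta_w)$, after which the statement follows by the fundamental theorem of calculus.

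To carry this out I would differentiate $F$ term by term, using the Leibniz rule for $\overline\nabla_a$ along the cotangent path $a$ (valid since each factor is a path over $\gamma$). For the pairing terms, Lemma~\ref{ConjConn2} — which uses that $\nabla$ is torsion-free — gives
\[
\frac{d}{dt}\langle \widetilde\theta_w,\overline{v}\rangle = \langle \overline\nabla_a\widetilde\theta_w,\overline{v}\rangle + \langle\widetilde\theta_w,\overline\nabla_a\overline{v}\rangle = \langle \theta_w,\overline{v}\rangle + \langle \widetilde\theta_w,\pi^\sharp\theta_v\rangle,
\]
where I substituted the defining ODE $\nabla_a\widetilde\theta_w = \theta_w$ and the previous lemma $\overline\nabla_a\overline{v} = \pi^\sharp\theta_v$ (and symmetrically for the $\widetilde\theta_v,\overline{w}$ term). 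I should be careful here about the distinction between $\nabla_a$ and $\overline\nabla_a$: the ODE \eqref{diff-eq} is written with $\nabla_a$, so I need that along a cotangent path $\nabla_a = \overline\nabla_{a}$ when acting on paths in $T^*M$ — this is where the relation $\overline\nabla_\alpha(\beta) = \nabla_{\pi^\sharp\beta}(\alpha) + [\alpha,\beta]_\pi$ and the torsion-free hypothesis enter, or else \eqref{diff-eq} is simply meant in the $\overline\nabla$ sense and I would note this. For the $\pi$-term I would differentiate using that $\pi$ is parallel in the appropriate contravariant sense; concretely, $\frac{d}{dt}\pi(\widetilde\theta_v,\widetilde\theta_w)$ should expand via Lemma~\ref{ConjConn2} applied with the vector-field path $\pi^\sharp\widetilde\theta_w$, giving $\langle\overline\nabla_a\widetilde\theta_v,\pi^\sharp\widetilde\theta_w\rangle + \langle\widetilde\theta_v,\overline\nabla_a(\pi^\sharp\widetilde\theta_w)\rangle$, and then Lemma~(\ref{EQ4}) — the one place $\pi$ being Poisson is used — lets me replace $\overline\nabla_a(\pi^\sharp\widetilde\theta_w)$ by $\pi^\sharp(\overline\nabla_a\widetilde\theta_w) = \pi^\sharp\theta_w$.

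Assembling the three derivatives, the terms $\langle\theta_w,\overline{v}\rangle - \langle\theta_v,\overline{w}\rangle$ reproduce the integrand $\omega_{\mathrm{can}}(v_t,w_t)$, and the remaining terms — $\langle\widetilde\theta_w,\pi^\sharp\theta_v\rangle$, $-\langle\widetilde\theta_v,\pi^\sharp\theta_w\rangle$, and the two pieces $-\langle\pi^\sharp\widetilde\theta_w,\theta_v\rangle$, $-\langle\widetilde\theta_v,\pi^\sharp\theta_w\rangle$ wait, coming from $-\frac{d}{dt}\pi(\widetilde\theta_v,\widetilde\theta_w)$ — should cancel in pairs by antisymmetry of $\pi$ (so that $\langle\widetilde\theta_w,\pi^\sharp\theta_v\rangle = -\langle\pi^\sharp\widetilde\theta_w,\theta_v\rangle = \langle\theta_v,\pi^\sharp\widetilde\theta_w\rangle$, etc.). The main obstacle I anticipate is bookkeeping the signs and the $\nabla$ versus $\overline\nabla$ distinction cleanly, and making sure the product rule for $\overline\nabla_a$ on the trilinear expression $\pi(\widetilde\theta_v,\widetilde\theta_w)$ is legitimate — i.e. that $\overline\nabla_a$ is a genuine derivation compatible with contractions along cotangent paths, which ultimately rests again on Lemma~\ref{ConjConn2} and the contravariant Leibniz identity. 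Once the algebra closes, integrating from $0$ to $1$ yields $\omega(v_0,w_0) = F(1) - F(0) = \left(\langle\widetilde\theta_w,\overline{v}\rangle - \langle\widetilde\theta_v,\overline{w}\rangle - \pi(\widetilde\theta_v,\widetilde\theta_w)\right)\big|_0^1$, as claimed.
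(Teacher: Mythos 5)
Your proposal is correct and follows essentially the same route as the paper: express $\omega(v_0,w_0)=\int_0^1(\langle\theta_w,\overline{v}\rangle-\langle\theta_v,\overline{w}\rangle)\,dt$ via Lemma \ref{torsion-free1}, then identify the integrand with $\frac{d}{dt}$ of the boundary expression using Lemma \ref{ConjConn2}, the relation $\overline{\nabla}_a\overline{v}=\pi^{\sharp}\theta_v$, the defining ODE, and (\ref{EQ4}); the paper merely runs the computation from the integrand toward the derivative rather than differentiating $F$ as you do, and it indeed reads (\ref{diff-eq}) in the $\overline{\nabla}$ sense, as you correctly surmise.
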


\begin{proof}
Since $\nabla$ is torsion-free, Lemma \ref{torsion-free1} implies that
\[ \omega(v_0, w_0)= \int_{0}^{1}(\langle \theta_w, \overline{v}\rangle - \langle \theta_v, \overline{w}\rangle ) dt.\]
Hence it suffices to show that
\[ \langle \theta_w, \overline{v}\rangle - \langle \theta_v, \overline{w}\rangle =
\frac{d}{dt}  (\langle \widetilde{\theta}_w, \overline{v}\rangle - \langle \widetilde{\theta}_v, \overline{w}\rangle- \pi(\widetilde{\theta}_{v},\widetilde{\theta}_w) ).\]
We start from the left hand side, in which we plug in $\theta_v= \nabla_{a}(\widetilde{\theta}_v)$ and the similar formula for $w$, followed by the use of Lemma \ref{ConjConn2}, then the previous lemma, then again the defining formula
for $\widetilde{\theta}_v$; we obtain
\begin{eqnarray*}
&& \frac{d}{dt}(\langle \widetilde{\theta}_w, \overline{v}\rangle - \langle \widetilde{\theta}_v, \overline{w}\rangle)- \langle \widetilde{\theta}_{w}, \overline{\nabla}_{a}(\overline{v})\rangle + \langle \widetilde{\theta}_{v}, \overline{\nabla}_{a}(\overline{w})\rangle = \\
&&\phantom{12}= \frac{d}{dt}(\langle \widetilde{\theta}_w, \overline{v}\rangle - \langle \widetilde{\theta}_v, \overline{w}\rangle)- \langle \widetilde{\theta}_{w}, \pi^{\sharp}(\theta_v)\rangle + \langle \widetilde{\theta}_{v}, \pi^{\sharp}(\theta_w)\rangle= \\
&&\phantom{12}= \frac{d}{dt}(\langle \widetilde{\theta}_w, \overline{v}\rangle - \langle \widetilde{\theta}_v, \overline{w}\rangle)- \langle \widetilde{\theta}_{w}, \pi^{\sharp}(\nabla_{a}(\widetilde{\theta}_v))\rangle + \langle \widetilde{\theta}_{v}, \pi^{\sharp}(\nabla_{a}(\widetilde{\theta}_w))\rangle .
\end{eqnarray*}
For the expression involving the last two term, using
$\pi^{\sharp}(\nabla_{a}(\widetilde{\theta}_v))=
\nabla_{a}(\pi^{\sharp}(\widetilde{\theta}_v))$ and the antisymmetry
of $\pi^{\sharp}$, we find
\[ - \langle \widetilde{\theta}_{w}, \nabla_{a}(\pi^{\sharp}(\widetilde{\theta}_v))\rangle - \langle \overline{\nabla}_a(\widetilde{\theta}_w), \pi^{\sharp}(\widetilde{\theta}_v) \rangle ,\]
which, by Lemma \ref{ConjConn2} again, equals to $-\frac{d}{dt}
\langle \widetilde{\theta}_w,
\pi^{\sharp}(\widetilde{\theta}_v)\rangle= -\frac{d}{dt}
\pi(\widetilde{\theta}_{v},\widetilde{\theta}_w)$. Plugging in the
previous formula, the desired equation follows.
\end{proof}

\section{The proof of the theorem}

We now return to the proof of Theorem \ref{main-theorem}. We start with the proof of the equality
(\ref{main-lemma}) from the introduction. By a dimension counting, it suffices to prove the reverse inclusion. Fix $\xi\in \mathcal{U}$.
We have to show that $\omega(v_0, w_0)= 0$ for all
\begin{equation}\label{cond}
v_0\in \mathcal{F}(p)_{\xi}, \ w_0\in \mathcal{F}(p_1)_{\xi}.
\end{equation}
These conditions are equivalent to $\overline{v}(0)= 0$, $\overline{w}(1)= 0$, where we use the notations from the previous section.
Remark that (\ref{diff-eq}), as an equation on  $\widetilde{\theta}_{v}$, is a linear ordinary differential equation; hence it has solutions
defined for all $t\in [0, 1]$, satisfying any given initial (or final) condition. Hence one may arrange that $\widetilde{\theta}_{v}(0)= 0$, $\widetilde{\theta}_{w}(1)=0$.
The formula from Lemma \ref{frmla} immediately implies that $\omega(v_0, w_0)= 0$.

\vspace*{.2in}

Finally, we show that $p$ is a Poisson map. We have to show that, for $\xi\in \mathcal{U}$ arbitrary,
$\theta\in T_{x}^{*}M$  ($x= p(\xi)$), the unique $v_0\in T_{\xi}\mathcal{U}$ satisfying
\begin{equation}\label{Poisson_map}
 p^{*}(\theta)_{\xi}= i_{v_0}(\omega)
\end{equation}
also satisfies $(dp)_{\xi}(v_0)= \pi^{\sharp}(\theta)$. From the
previous formula it immediately follows that $v_0$ is in
$\mathcal{F}(p)^{\perp}$, hence in $\mathcal{F}(p_1)$, hence
$\overline{v}(1)= 0$, where we start using the notations from Lemma
\ref{frmla}. Next, we evaluate (\ref{Poisson_map}) on an arbitrary
$w_0\in T_{\xi}\mathcal{U}$. We also use the formula for $\omega$
from Lemma \ref{frmla}, where $\widetilde{\theta}_v$ and
$\widetilde{\theta}_w$ are chosen so that $\widetilde{\theta}_v(1)=
0$ and $\widetilde{\theta}_w(0)= \eta\in T_{x}^{*}M$ is arbitrary.
We find:
\[ \theta(\overline{w}_0)= \langle \widetilde{\theta}_v(0), \overline{w}_0\rangle + \langle \eta, \pi^{\sharp}\widetilde{\theta}_v(0)- \overline{v}_0\rangle\]
Since this holds for all $w_0$ and all $\eta$, we deduce that
$\theta= \widetilde{\theta}_v(0)$,
$\pi^{\sharp}\widetilde{\theta}_v(0)= \overline{v}_0$. Hence
$\pi^{\sharp}(\theta)= \overline{v}_0= (dp)_{\xi}(v_0)$.

\section{Some remarks}

Here are some remarks on possible variations. First of all, regarding the notion of contravariant spray, the first condition means that, locally, $\mathcal{V}_{\pi}$ is
of type
\[ \mathcal{V}_{\pi}(x, y)= \sum_{p, q} \pi_{p, q}(x, y)y_p\frac{\partial}{\partial x_q}+ \sum_i \gamma^i(x, y) \frac{\partial}{\partial y_i}.\]
The second condition means that each $\gamma^i(x, y)$ is of type $\sum_{j, k} \gamma^{i}_{j, k}(x) y_j y_k$.
While the first condition has been heavily used in the paper, the second one was
only used to ensure that $\omega$ is well-defined and non-degenerate at elements $0_x\in T^{*}_{x}M$.

Another remark is that one can show that $\mathcal{U}$ can be made into a local symplectic groupoid, with source map $p$ and target map $p_1$;
see also \cite{Kar}.

Let us also point out where we used that $\pi$ is Poisson: it is
only for the compatibility relation (\ref{EQ4}) which, in turn, was
only used at the end of the proof of the Lemma \ref{frmla}. However,
it is easy to keep track of the extra-terms that show up for general
bivectors $\pi$: at the right hand side of (\ref{EQ4}) one has to
add the term $i_{\alpha\wedge \beta}(\chi_{\pi})$ where $\chi_{\pi}=
[\pi, \pi]$, while to the equation from Lemma \ref{frmla} the term
$\int_{0}^{1} \chi_{\pi}(a, \widetilde{\theta}_v,
\widetilde{\theta}_w) dt$. This is useful e.g. for handling various
twisted versions. E.g., for a  $\sigma$-twisted bivector $\pi$ on
$M$ in the sense of \cite{WeinSev} (i.e. satisfying $[\pi, \pi]=
\pi^{\sharp}(\sigma)$ where $\sigma$ is a given closed 3-form
$\sigma$ on $M$), the interesting (twisted symplectic) 2-form on
$\mathcal{U}$ is the previously defined $\omega$ to which we add the
new two-form $\omega_{\sigma}$ given by (compare with \cite{CaXu}):
\[ \omega_{\sigma} = \int_{0}^{1} \varphi_{t}^{*}(i_{\mathcal{V}_{\pi}}p^*(\sigma)) dt.\]

\bibliographystyle{amsplain}

\begin{thebibliography}{11}



\bibitem{CaFe} A.~Cattaneo and G.~Felder,
   Poisson sigma models and symplectic groupoids,
   Quantization of singular symplectic quotients, 61--93, Progress in Mathematics, 198, Birkhauser, Basel, 2001.

\bibitem{CaXu} A.~Cattaneo and P.~Xu,
   Integration of twisted Poisson structures,
   \emph{J.~Geom.~Phys.}~ \textbf{49} (2004) 187--196.

\bibitem{CrFe2} M.~Crainic and R.L.~Fernandes,
   Integrability of Poisson brackets,
   \emph{J.~Diff.~Geom.}~\textbf{66} (2004), 71--137.

\bibitem{DWC} A.~Coste, P.~Dazord and A.~Weinstein,
   Groupoides symplectiques,
   \emph{Publ.~D\'ep.~Math.~Nouvelle~Ser.~A}\textbf{2} (1987), 1--62.

\bibitem{DZ} J.P.~Dufour and N.T.~Zung ,
   Poisson structures and their normal forms,
   Progress in Mathematics, 242, Birkhauser Verlag, Basel, 2005.


\bibitem{Fer} R.L.~Fernandes,
   Connections in Poisson geometry,
   \emph{J.~Diff.~Geom.}~\textbf{54} (2000), 303--365.

\bibitem{Kar} M.V.~Karasev,
   Analogues of objects of the theory of Lie groups for nonlinear Poisson brackets,
   \emph{Math.~USSR-Izv.}~\textbf{28} (1987), 497--527.

\bibitem{Lang} S.~Lang,
   Introduction to differentiable manifolds,
   Interscience Publishers (a division of John Wiley \& Sons, Inc.), New York-London 1962.

\bibitem{WeinSev} P.~Severa and A.~Weinstein,
   Poisson geometry with a 3-form background
   \emph{Progr.~Theoret.~Phys.~Suppl.}\textbf{144} (2001), 145--154.

\bibitem{Wein} A.~Weinstein,
   The local structure of Poisson manifolds,
   \emph{J.~Diff.~ Geom.~}\textbf{18} (1983), 523--557.

\bibitem{Wein2} A.~Weinstein,
   Lagrangian mechanics and groupoids,
   Fields Inst. Commun., 7, (1996), 207--231.




\end{thebibliography}
\def\lllll{}

\end{document}